\newtheorem{satz}{Satz}
\newtheorem{thm}[satz]{Theorem}
\newtheorem{lem}[satz]{Lemma}
\newtheorem{cor}[satz]{Corollary}
\newtheorem{pro}[satz]{Proposition}
\newtheorem{defi}[satz]{Definition}
\theoremstyle{remark}
\newtheorem{rem}{Remark}
\renewcommand{\O}{\mathcal{O}}
\renewcommand{\P}{\mathds{P}}
\renewcommand{\H}{\textup{H}}
\newcommand*{\defeq}{\mathrel{\vcenter{\baselineskip0.5ex \lineskiplimit0pt
			\hbox{\scriptsize.}\hbox{\scriptsize.}}}%
	=}
\newcommand*{\eqdef}{=\mathrel{\vcenter{\baselineskip0.5ex \lineskiplimit0pt
			\hbox{\scriptsize.}\hbox{\scriptsize.}}}%
	}
\newcommand\restr[2]{
	{\left.\kern-\nulldelimiterspace#1\vphantom{\big|}\right|_{#2}}
}
\let\enumerateO\enumerate
\let\endenumerateO\endenumerate
\renewenvironment{enumerate}{\enumerateO\setlength{\parskip}{0em}\setlength{\parindent}{1em}}{\endenumerateO}
\newcommand{\mylabel}[2]{(#2)\def\@currentlabel{#2}\label{#1}}
\newcommand{\Z}{\ensuremath{\mathds{Z}}}
\newcommand{\Q}{\ensuremath{\mathds{Q}}}
\newcommand{\R}{\ensuremath{\mathds{R}}}
\newcommand{\C}{\ensuremath{\mathds{C}}}
\newcommand{\Nn}{\ensuremath{\mathds{N}_{0}}}
\newcommand{\GL}{\ensuremath{GL}}
\newcommand{\inj}{\hookrightarrow}
\newcommand{\fN}{\ensuremath{\mathcal{N}}}
\newcommand{\fS}{\ensuremath{\mathcal{S}}}
\newcommand{\fT}{\ensuremath{\mathcal{T}}}
\DeclareMathOperator{\tr}{tr}
\DeclareMathOperator{\h}{h}
\DeclareMathOperator{\Aut}{Aut}
\title[]{On the definition of irreducible holomorphic symplectic manifolds and their singular analogs}
\date{\today}
\author{Martin Schwald}
\address{Martin Schwald, Universität Duisburg-Essen, Fakultät für Mathematik, Thea-Leymann-Str.\,9, 45127 Essen}
\email{\href{mailto:martin.schwald@uni-due.de}{martin.schwald@uni-due.de}} \urladdr{\url{http://www.esaga.uni-due.de/martin.schwald/}}
\date{\today}
\begin{document}

\begin{abstract}
In the definition of irreducible holomorphic symplectic manifolds the condition of being simply connected can be replaced by vanishing irregularity.  We discuss holomorphic symplectic, finite quotients of complex tori with $\h^0(X,\,\Omega^{[2]}_X)=1$ and their Lagrangian fibrations. Neither $X$ nor the base can be smooth unless $X$ is a $2$-torus.
\end{abstract}

\maketitle
\setcounter{tocdepth}{1}
\tableofcontents

\section{Introduction and main result}

\emph{Irreducible (holomorphic) symplectic manifolds} became popular objects to study as together with complex tori and Calabi-Yau manifolds they appear as factors in the Beauville-Bogomolov decomposition theorem \cite[Th\'{e}or\`{e}m~2]{Bea83}.
We found the following equivalent condition to Beauville's original definition \cite[p.~763f]{Bea83}.

\begin{thm}
\label{ihs}
Let $X$ be a compact Kähler manifold such that $\H^0(X,\,\Omega^2_X)\cong\C$ is generated by a holomorphic symplectic form.
Then $\h^1(X,\,\O_X)=0$ if and only if $X$ is simply connected, and therefore an irreducible symplectic manifold.
\end{thm}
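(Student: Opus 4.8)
The plan is to prove the two implications of the equivalence separately, the reverse one being routine. If $X$ is simply connected then $\H^1(X,\Z)=0$, so $b_1(X)=0$, and Hodge symmetry on the compact Kähler manifold $X$ yields $\h^1(X,\O_X)=\tfrac12 b_1(X)=0$; the final clause is then automatic, as simple connectivity together with $\H^0(X,\Omega^2_X)=\C\sigma$ for a symplectic $\sigma$ is precisely Beauville's definition of an irreducible symplectic manifold. All the work is in the forward implication $\h^1(X,\O_X)=0\Rightarrow\pi_1(X)=1$.

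For this I would first note that the symplectic form trivializes the canonical bundle: with $\dim_\C X=2n$ the top power $\sigma^{\wedge n}$ is a nowhere-vanishing section of $K_X$, so $K_X\isom\O_X$ and $c_1(X)=0$. This makes the Beauville–Bogomolov decomposition available: there is a finite étale Galois cover $\pi\from\tilde{X}\to X$ with group $G$ and a splitting $\tilde{X}\isom T\times\prod_i V_i\times\prod_k W_k$ into a complex torus, Calabi–Yau factors $V_i$ and irreducible symplectic factors $W_k$. The pullback $\pi^*\sigma$ is a $G$-invariant holomorphic symplectic form; since $\H^0(\Omega^1_{V_i})=\H^0(\Omega^2_{V_i})=0$, every tangent direction along a factor $V_i$ lies in the kernel of $\pi^*\sigma$, so nondegeneracy forces $\dim V_i=0$ and there are no Calabi–Yau factors.

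Next I would bring in $\h^0(X,\Omega^2_X)=1$. As the de Rham decomposition is canonical, $G$ preserves $T$ and permutes the $W_k$, so $\H^0(\Omega^2_{\tilde{X}})=\Lambda^2\H^0(\Omega^1_T)\oplus\bigoplus_k\C\sigma_k$ is a decomposition into $G$-submodules with $\H^0(X,\Omega^2_X)=\H^0(\Omega^2_{\tilde{X}})^G$ one-dimensional. The projections of $\pi^*\sigma$ onto the two summands are themselves invariant, and each is nonzero whenever its part is present — the torus part because $\pi^*\sigma$ must restrict nondegenerately to $T$, the $W_k$-part because every $\sigma_k$ occurs with a nonzero coefficient. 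Hence exactly one block survives: either $\tilde{X}=T$ is a torus, or $\tilde{X}=\prod_k W_k$ and, projecting $\pi^*\sigma$ onto the span of each $G$-orbit of factors, $G$ acts transitively on the $W_k$.

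The crux — and the step I expect to cost the most — is to eliminate the torus block and, in the remaining case, to collapse the cover. Here $\h^1(X,\O_X)=\h^0(X,\Omega^1_X)=0$ enters as $\H^0(\Omega^1_T)^G=0$: the deck action fixes no nonzero $1$-form. Together with freeness of the étale action — which forces every nontrivial $g\in G$ to act on $T$ with eigenvalue $1$, since otherwise $g$ would have a fixed point — and the uniqueness of the invariant $2$-form, this rigidity should force $G=1$, so that $X=T$ is a torus with $\h^0(X,\Omega^2_X)=\binom{\dim T}{2}=1$, i.e.\ a $2$-torus, which is excluded by $\h^1(X,\O_X)=2\neq0$. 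In the product case the same freeness should rule out any nontrivial permutation of the $W_k$, forcing a single factor, after which $G$ fixes $\sigma_1$ and hence every power $\sigma_1^{\wedge j}$, so $\h^{0,2j}(X)=1$ for all $j$ and $\chi(\O_X)=n+1$; comparing this with $\chi(\O_X)=\chi(\O_{\tilde{X}})/|G|=(n+1)/|G|$ gives $|G|=1$. Then $X=W_1$ is irreducible symplectic, in particular simply connected. The soft reductions above are straightforward; it is precisely these last rigidity statements — that freeness plus the uniqueness of $\sigma$ leave no room for a nontrivial cover — where the genuine difficulty lies.
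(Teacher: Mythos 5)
Your soft reductions match the paper's: Beauville--Bogomolov decomposition of a finite \'etale Galois cover, elimination of Calabi--Yau factors by non-degeneracy of the pulled-back form, and the dichotomy ``torus block or product of irreducible symplectic factors'' forced by $\h^0(X,\,\Omega^2_X)=1$. But the two places you yourself flag as ``rigidity statements'' are genuine gaps, and one of them is exactly where the real content of the theorem sits. In the torus case you propose that freeness (every nontrivial $g$ has linear part with eigenvalue $1$), $\H^0(\Omega^1_T)^G=0$, and uniqueness of the invariant $2$-form ``should force $G=1$.'' This is false at the level at which you argue: the paper's Section~6 exhibits, for all three Frobenius--Schur types, faithful irreducible representations $L$ of nontrivial (primitive) finite groups in which \emph{every} representing matrix has $1$ as an eigenvalue, there is no invariant $1$-form, and the invariant $2$-form is unique and symplectic. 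So no linear-algebraic or character-theoretic rigidity of the kind you invoke can conclude $G=1$; what fails for these examples is only the existence of a system of translations making the affine action free, and the only known proof of that failure goes through Lemma~\ref{h2} (the analytic representation is irreducible quaternionic or $L_1\oplus\overline{L_1}$, hence has homogeneous decomplexification) combined with Lutowski's theorem, generalizing Hiss--Szczepa\'nski, that a smooth torus quotient with homogeneous rational holonomy is a torus --- a result resting on the classification of finite simple groups. The paper explicitly remarks that naive attempts at this step fail; your sketch is one of them.

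The product case has a second, smaller gap. Your Euler-characteristic comparison $\chi(X,\,\O_X)=(n+1)/|G|=n+1$ does work once there is a single factor $W_1$ with $G$ fixing $\sigma_1$, but your claim that ``freeness rules out any nontrivial permutation of the $W_k$'' is unjustified: fixed-point-free automorphisms permuting factors exist (e.g.\ $(x,y)\mapsto(y,f(x))$ on $S\times S$ with $f$ an Enriques involution of a K3 acts freely), and what excludes them here is not freeness alone but its interplay with the preserved symplectic form. The paper handles all of $G$ at once by normalizing the $\omega_i$ so that $f^*$ permutes them and their wedge products, whence $\tr\bigl(f^*|_{\H^{j,0}}\bigr)$ vanishes for odd $j$, is non-negative for even $j$, and equals $1$ at $j=0$ and $j=2\dim X$; the holomorphic Lefschetz fixed point formula then produces a fixed point, contradicting freeness. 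You would need this (or an equivalent trace argument) to close your permutation step; your $\chi$ comparison does not extend, since for several factors $\chi(\O_{\tilde X})=\prod_k(n_k+1)$ exceeds $n+1$ and leaves room for $|G|>1$.
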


The condition $\h^1(X,\,\O_X)=0$ naturally fits with definitions of singular analogs of irreducible symplectic manifolds discussed in the literature.
We recall them in Section~\ref{symplectic} to make this point precise.

To prove Theorem~\ref{ihs}, the main situation to consider consists of smooth finite quotients of complex tori.
Motivated by this and examples of Matsushita, we study symplectic torus quotients $X$ with $\h^0(X,\,\Omega^{[2]}_X)=1$ and describe their Lagrangian fibrations.
If $X$ is not a $2$-torus, it cannot be smooth by an application of results of Hiss--Szczepa\'nski and Lutowski and any Lagrangian fibration $X\to B$ is similarly built like Matsushita's example \cite[p.~7f]{Mat01}.
In particular $B$ is singular and $K_B$ torsion, see Theorem~\ref{lag2}.
We conclude with some singular examples constructed with the assistance of GAP \cite{GAP4}.

\subsection*{Acknowledgment}
The author thanks Daniel Greb, Christian Lehn and Andreas Demleitner for discussions and support.

\section{Singular symplectic varieties}
\label{symplectic}

Let $X$ be a normal complex variety, i.e. an irreducible and reduced Hausdorff complex space.
Holomorphic $p$-forms on the smooth locus $X_{reg}$ are called \emph{reflexive differential forms}.
Their associated sheaves on $X$ are denoted by $\Omega^{[p]}_X$ and we get
\[
  \Omega^{[p]}_X\cong i_*\Omega^p_{X_{reg}}\cong(\Omega^p_X)^{\ast\ast}\quad\text{and}\quad\H^0(X,\,\Omega^{[p]}_X)\cong\H^0(X_{reg},\,\Omega_{X_{reg}}^p)
\]
for $i\colon X_{reg}\inj X$ the inclusion.
Reflexive forms satisfy good pullback properties, in particular the extension theorem of Kebekus--Schnell shows that when $X$ has rational singularities, then they always extend to every resolution of $X$ \cite[Corollary~1.8]{KS18}.
In this terminology Beauville's definition of \emph{symplectic varieties} \cite[Definition~1.1]{Bea00} can be reformulated as follows.

\begin{defi}
A \emph{symplectic variety} is a Kähler variety with rational singularities and an $\omega\in\H^0(X,\,\Omega^{[2]}_X)$ that is non-degenerate at every smooth point of $X$, i.e. a (holomorphic) \emph{symplectic form} on the smooth locus.
\end{defi}

There are the following two widely studied classes of symplectic varieties $X$, which share many properties with irreducible symplectic manifolds. 

\begin{enumerate}
\item[(a)]\label{b}
$(X,\omega)$ compact symplectic variety with $\h^1(X,\,\O_X)=0$ and $\h^0(X,\,\Omega^{[2]}_X)=1$
\item[(b)]\label{a}
$(X,\omega)$ compact symplectic variety such that for every \emph{quasi-\'{e}tale} morphism $f\colon X'\to X$ the pullback $f^*\omega\in\H^0(X',\,\Omega^{[2]}_{X'})$ generates the exterior algebra of reflexive forms on $X'$, i.e.~$\bigoplus_{p\in\Nn}\H^0(X',\,\Omega^{[p]}_{X'})=\C\left[f^*\omega\right]$,
\end{enumerate}
where \emph{quasi-\'{e}tale} means that $f$ is a finite surjective morphism between normal complex varieties that is \'{e}tale outside of an analytic subset $Z\subset X'$ of codimension at least two.

Definition~(a) is older and was, often with stronger assumptions to the singularities of $X$ or assuming projectivity, first studied by Namikawa and Matsushita.
Definition~(b) is the class of symplectic varieties appearing in a singular version of the Beauville-Bogomolov decomposition theorem, which was proven in the projective case \cite[Theorem~1.5]{HP19} building upon \cite{GKP16, GGK17, DG18, Dru18}.
It generalizes the definition of irreducible symplectic manifolds: For smooth $X$ the map $f$ is \'{e}tale by the purity of the branch locus, then we only need to compare the holomorphic Euler characteristics applying \cite[Proposition~3]{Bea83}.

Building on results of Matsushita, the author of the present paper compared the geometric properties of both definitions \cite{Sch17}.
Symplectic varieties of both classes share many similarities with irreducible symplectic manifolds and those of definition~(a) remain interesting as there turned out to be a nice moduli theory for them \cite{BL18}.

The question if definition~(a) is also a generalization of irreducible symplectic manifolds apparently remained unclear in the literature, see for example \cite[Lemma~3.3 and preceding paragraph]{BL18} and \cite[page 18, last paragraph]{Per19}, but now we can finally answer it positively by Theorem~\ref{ihs}.

\begin{rem}
The reader is advised to be careful, as the terminology used in the literature to refer to the above definitions~(a) and (b) as well as to other classes of symplectic varieties is inconsistent.
\end{rem}

\section{An application of the decomposition theorem}

As a first structural result towards Theorem~\ref{ihs} we prove the following Proposition, which is related to \cite[Proposition~A.1]{HN11}.

\begin{pro}
\label{simplyconnected}
Let $(X,\omega)$ be a smooth symplectic variety with $\h^1(X,\,\O_X)=0$ and $\h^0(X,\,\Omega^{[2]}_X)=1$. Then $X$ is either simply connected or a smooth quotient of a complex torus by a finite group of biholomorphic automorphisms.
\end{pro}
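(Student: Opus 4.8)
The plan is to invoke the Beauville–Bogomolov decomposition theorem and to read off the two cases from the numerical hypotheses. Since $X$ is smooth, $\omega$ is non-degenerate everywhere, so $\omega^{\wedge n}$ (with $2n=\dim_{\C}X$) is a nowhere-vanishing section of $K_X$; thus $K_X\cong\O_X$ and $X$ is a compact Kähler manifold with vanishing first Chern class. By \cite[Th\'{e}or\`{e}m~2]{Bea83} there is a finite \'{e}tale Galois cover $\pi\colon\tilde X\to X$ with deck group $G$ and a product decomposition $\tilde X\cong T\times\prod_kV_k\times\prod_lY_l$, where $T$ is a complex torus, the $V_k$ are Calabi–Yau and the $Y_l$ are irreducible symplectic manifolds. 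Hodge symmetry rewrites the hypothesis $\h^1(X,\O_X)=0$ as $\H^0(X,\Omega^1_X)=0$, and since $\H^0(X,\Omega^p_X)=\H^0(\tilde X,\Omega^p_{\tilde X})^G$ this records that $T$ carries no $G$-invariant holomorphic $1$-form.

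Next I would compute the relevant spaces on $\tilde X$ by Künneth. As $\H^0(V_k,\Omega^1)=\H^0(V_k,\Omega^2)=0=\H^0(Y_l,\Omega^1)$ and $\H^0(Y_l,\Omega^2)=\C\sigma_l$, I get $\H^0(\tilde X,\Omega^1)\cong\H^0(T,\Omega^1)$ and a $G$-stable decomposition $\H^0(\tilde X,\Omega^2)\cong\bigwedge^2\H^0(T,\Omega^1)\oplus\bigoplus_l\C\sigma_l$, the first summand being the canonical image of $\bigwedge^2\H^0(\Omega^1)$ and the second being $G$-stable because $G$ permutes the factors $Y_l$. Writing $\pi^*\omega=\omega_T+\sum_lc_l\sigma_l$, non-degeneracy of $\pi^*\omega$ forbids any Calabi–Yau factor, makes $\omega_T$ non-degenerate on $T$, and forces every $c_l\neq0$. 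Because $\H^0(X,\Omega^2_X)=\H^0(\tilde X,\Omega^2_{\tilde X})^G$ is one-dimensional while $\omega_T$ and $\sum_lc_l\sigma_l$ are each $G$-invariant, these two cannot both be non-zero. Hence either there is no symplectic factor, so $\tilde X=T$ and $X=T/G$ is a finite quotient of a torus, or $T$ is a point and $\tilde X=\prod_lY_l$ is simply connected.

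In the latter case $\pi$ is the universal cover, so $\pi_1(X)\cong G$ and it suffices to show $G=1$; equivalently, that every $g\in G\smallsetminus\{1\}$ has a fixed point, contradicting freeness of the action. For such a $g$ I would choose a cycle $Y_{i_1}\to\cdots\to Y_{i_r}\to Y_{i_1}$ of the induced permutation of the factors and set $\Phi\defeq g^{\,r}|_{Y_{i_1}}$, whose fixed points produce fixed points of $g$. The crucial observation is that $g^*\pi^*\omega=\pi^*\omega$ together with all $c_l\neq0$ forces the gluing isomorphisms along the cycle to be symplectic up to the scalars $c_{i_j}/c_{i_{j+1}}$, whose product around the cycle telescopes to $1$; therefore $\Phi^*\sigma_{i_1}=\sigma_{i_1}$, i.e.\ $\Phi$ is a genuine symplectic automorphism of the irreducible symplectic manifold $Y_{i_1}$. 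Since such a manifold has holomorphic forms only in even degrees and $\Phi^*$ acts as the identity on each $\H^{0,2j}(Y_{i_1})=\C\,\overline{\sigma_{i_1}}^{\,j}$, its holomorphic Lefschetz number is $\tfrac12\dim Y_{i_1}+1>0$, so $\Phi$—and hence $g$—has a fixed point.

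The main obstacle is exactly this last step, namely controlling the deck transformations that permute the symplectic factors: a priori the gluing maps are only projectively symplectic, and it is the telescoping of the coefficients of the invariant form $\pi^*\omega$ that upgrades them to honest symplectomorphisms and thereby makes the holomorphic Lefschetz number non-zero. Once all non-trivial elements of $G$ are seen to have fixed points, freeness yields $G=1$, so $X=\tilde X$ is simply connected; the hypothesis $\h^0(X,\Omega^2_X)=1$ then additionally singles out a single symplectic factor.
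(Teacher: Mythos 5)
Your proposal is correct and takes essentially the same route as the paper: a finite étale Galois cover via the Beauville--Bogomolov decomposition, a Künneth analysis of $\H^0(\Omega^2)$ in which non-degeneracy kills the Calabi--Yau factors and one-dimensionality of $\H^0(X,\Omega^2_X)$ forces the dichotomy between the torus case and the simply connected case, and the holomorphic Lefschetz fixed point formula to show the deck group acts trivially on the product of irreducible symplectic factors. The only cosmetic difference is that the paper rescales the forms $\omega_i$ so that the invariant form is their sum, whence $f^*$ literally permutes them and Lefschetz is applied on the whole product, whereas you telescope the coefficients $c_l$ around each cycle and apply Lefschetz factorwise to the cycle composite $\Phi$ --- just make explicit that a fixed point of $g$ requires a fixed point of the cycle composite for \emph{every} cycle simultaneously (your argument supplies one for each), and that the assertion that $G$ permutes the factors rests on Beauville's lemma on automorphisms of such products, which the paper cites.
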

\begin{proof}
By the Beauville-Bogomolov decomposition theorem, there is a finite \'{e}tale covering $\pi\colon\hat X\to X$ that splits as a product $\hat{X}\cong T\times X'\times Y$, where $T$ is a complex torus, $X'\defeq\prod_{i=1}^{k} X_i$ is the product of irreducible symplectic manifolds $X_i$, and $Y$ a product of at least $3$-dimensional Calabi-Yau manifolds \cite[Th\'{e}or\`{e}m~2]{Bea83}.
By going to a finite \'{e}tale covering of $\hat X$ we may assume $\pi$ to be a Galois covering, so $X\cong \hat X/G$ for $G\subset\Aut\hat X$ a group of biholomorphic automorphisms of $\hat X$ with $|G|=\deg\pi$, \cite[Proposition~3]{Bea83a}.

We can identify $\H^0(X,\,\Omega^2_X)$ with the space of $G$-invariant holomorphic $2$-forms $\H^2(\hat{X},\,\Omega^2_{\hat{X}})^G$, hence the latter one is generated by the pullback $\pi^*\omega$ of the symplectic form.
As $\h^0(X',\,\Omega^1_{X'})=\h^0(Y,\,\Omega^1_Y)=\h^0(Y,\,\Omega^2_Y)=0$, we get by the Künneth formula a decomposition $\pi^*\omega=\pi_T^*\eta+\pi_{X'}^*\omega'$, where $\pi_T\colon\hat X\to T$ and $\pi_{X'}\colon\hat{X}\to X'$ are the projections, and $\eta\in\H^0(T,\,\Omega^2_T)$, $\omega'\in\H^0(X',\,\Omega^2_{X'})$ are $2$-forms on the factors.
As $\pi$ is \'{e}tale, $\pi^*\omega$ is also non-degenerate, hence $Y$ is trivial and the forms $\eta$ and $\omega'$ are non-degenerate as well.

For every $f\in\Aut(\hat X)$ there are automorphisms $g\in\Aut(T)$ and $h\in\Aut(X')$ such that $f=(g,h)$
\cite[p.~8, Lemma]{Bea83a}, so $\pi_T^*\eta$ and $\pi_{X'}^*\omega'$ are $G$-invariant holomorphic $2$-forms on $\hat{X}$.
As $\h^0(X,\,\Omega^2_X)=1$, it follows that either $T$ or $X'$ has to be trivial. If $X'$ is trivial then $X\cong T/G$ and the proof is complete.

Hence we can assume from now on that $T$ is trivial, so $X\cong X'/G$. In this case, we show that every automorphism $f\in G\subset\Aut(X')$ has a fixed point. For this, let $\omega_i$ be the pullbacks of the symplectic forms of the factors $X_i$ to $X'$.
By \cite[p.~762f, Propositions~3, 4]{Bea83} and the Künneth formula, $\bigoplus_{p\in\Nn}\H^0(X',\,\Omega^{p}_{X'})$ is generated by the wedge products of the $\omega_i$. In particular $\H^{j,0}(X')=0$ for all odd $j$.
As $(X,\omega)$ is symplectic, $f^*$ preserves a symplectic form on $X'$. By rescaling the $\omega_i$ we may assume it to be the sum of the $\omega_i$.

By \cite[p.~10, b)+c)]{Bea83a} the automorphism $f$ acts on $X'$ by first possibly permuting isomorphic factors $X_i$ and then applying automorphisms $f_i\in\Aut(X_i)$ on each factor.
In particular there is a permutation $\sigma\in\fS_k$ such that for all $i$ we have that $f^*\omega_i$ is a multiple of $\omega_{\sigma(i)}$. As $f^*$ preserves the sum of the $\omega_i$, it simply permutes the $\omega_i$, as well as their wedge products.
Therefore $\tr(f^*|_{H^{j,0}(\hat{X})})$ is zero for all odd $j$, non-negative for all even $j$ and it equals one for $j=0,2\dim(X)$.
Thus $f$ has a fixed point by the holomorphic Lefschetz fixed point formula \cite[p.~426]{GH}.
As we assumed $X$ to be smooth, it follows that $G$ acts trivially on $X'$, so $X\cong X'$ is simply connected.
\end{proof}

\begin{rem}
Proposition~\ref{simplyconnected} has weaker assumptions than \cite[Proposition~A.1]{HN11}, which asserts that a complex projective manifold $X$ is simply connected if it is symplectic and the symplectic form generates the exterior algebra of global holomorphic differential forms $\bigoplus_{p\in\Nn}\H^0(X,\,\Omega^{p}_{X})$. It was noted that the published proof is incomplete, as the argument in \cite[Proposition~A.1]{HN11} to exclude the torus factor is flawed. This can easily be repaired for example by first arguing like above and then using that under the given stronger assumptions one can exclude the case where $X=T/G$ is a smooth torus quotient by comparing the holomorphic Euler characteristics: $\deg\pi\cdot\chi(X,\,\O_X)=\deg\pi\cdot(\frac{1}{2}\dim X+1)\neq0=\chi(T,\,\O_T)$.

However, in the situation of Proposition~\ref{simplyconnected} there is no obvious reason why $\chi(X,\,\O_X)$ should be non-zero when $\dim X\geq6$. This was the motivation of the present article. The situation where $X=T/G$ is a smooth quotient of a complex torus needs a deeper analysis to see that it does not occur, which we will do in the next two sections.
\end{rem}

\section{Complex torus quotients}
\label{ctq}
We use here the term \emph{(complex) torus quotient} for quotients $X\defeq T/G$ where $T$ is a complex torus and $G$ a finite group acting by biholomorphisms on $T$.
For a torus quotient the quotient map $T\to X$ is a finite morphism.
Every $g\in G$ acts on the universal covering $\C^n$ of $T$ as the composition of a linear endomorphism $L(g)$ and a translation $t(g)$.
Then $L\colon G\to\GL_{\C}(\C^n)$ is a group homomorphism, we call it the induced \emph{analytic representation} of the group action, cf. \cite[p.10]{BiLa}.
Then $\ker L$ is the set of translations in $G$.
Up to an isogeny of $T$, we may assume $G$ not to contain non-trivial translations and therefore $L$ to be faithful.
This way $L$ becomes a faithful linear representation of $G$.
As $G$ is finite, $L(g)$ has finite order for every $g\in G$.
In particular $L(g)$ is diagonizable and all its eigenvalues are roots of unity.
Moreover, $X$ has finite quotient singularities, so for every $k\in\Nn$ we have $\H^0(X,\,\Omega^{[k]}_X)\cong\H^0(T,\,\Omega^k_T)^G$, 
which can be identified with the space of holomorphic $k$-forms on $\C^n$ that are invariant under every translation and every $L(g)$ for $g\in G$.

\subsection*{Representation theory}
We denote the character of a complex representation $\rho$ of a finite group $G$ by $\chi_{\rho}$, the inner product of characters by $(\cdot|\cdot)$ \cite[Section~2.3]{Ser77} and the trivial representation of $G$ by $\mathbb{I}_G$.
When $\rho$ is irreducible, it can be of real, complex or quaternionic type, depending on its Frobenius-Schur-indicator $\iota(\rho)\defeq\frac{1}{|G|}\sum_{g\in G}\chi_{\rho}(g^2)$ being $1$, $0$ or $-1$, respectively  \cite[p.108--109]{Ser77}.


\begin{lem}[Reflexive 1-forms on torus quotients]
\label{h1}
Let $X=T/G$ be an $n$-dimensional torus quotient with induced analytic representation $L$. Then $\H^0(X,\,\Omega^{[1]}_X)$ is isomorphic to the space of $v\in\C^n$ fixed under $L(g)$ for all $g\in G$. Thus $\h^0(X,\,\Omega^{[1]}_X)=(\chi_L|\chi_{\mathbb{I}_G})=\frac{1}{|G|}\sum_{g\in G}\tr L(g)$ counts the multiplicity of the trivial representation $\mathbb{I}_G$ as a component in $L$.
\end{lem}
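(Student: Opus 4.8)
The plan is to start from the identification of reflexive one-forms on $X$ with $G$-invariant holomorphic one-forms on the torus, which was already established above, and then to translate the invariance condition into representation theory.

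First I would observe that a translation-invariant holomorphic one-form on $\C^n$ necessarily has constant coefficients, i.e.\ is of the form $\sum_{j} a_j\,dz_j$ with $a_j\in\C$; these forms span a space canonically isomorphic to the dual $(\C^n)^*$. Since $\H^0(T,\,\Omega^1_T)$ consists exactly of the translation-invariant forms, this gives $\H^0(T,\,\Omega^1_T)\isom(\C^n)^*$, and together with the already-established isomorphism $\H^0(X,\,\Omega^{[1]}_X)\isom\H^0(T,\,\Omega^1_T)^G$ it reduces the claim to understanding the $G$-action on $(\C^n)^*$ and computing its invariants.

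Next I would compute that action explicitly. Writing the action of $g$ on the universal cover as $z\mapsto L(g)z+t(g)$, the pullback satisfies $g^*dz_j=\sum_k L(g)_{jk}\,dz_k$, so the translation part $t(g)$ drops out and the induced action on $(\C^n)^*$ is the contragredient representation $L^*$. Hence $\H^0(X,\,\Omega^{[1]}_X)\isom\bigl((\C^n)^*\bigr)^G$, the space of $L^*$-fixed covectors.

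Finally, to match the stated form of the answer, I would invoke that for a finite group the trivial representation occurs with the same multiplicity in $L$ and in its dual $L^*$: indeed $\chi_{L^*}(g)=\overline{\chi_L(g)}$, whence $(\chi_{L^*}|\chi_{\mathbb{I}_G})=\overline{(\chi_L|\chi_{\mathbb{I}_G})}=(\chi_L|\chi_{\mathbb{I}_G})$ because this multiplicity is a nonnegative integer. Thus the fixed space of $L^*$ is (noncanonically) isomorphic to the space of $v\in\C^n$ with $L(g)v=v$ for all $g\in G$, and its dimension equals $(\chi_L|\chi_{\mathbb{I}_G})=\frac{1}{|G|}\sum_{g\in G}\tr L(g)$, the multiplicity of $\mathbb{I}_G$ in $L$ by the orthogonality relations. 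The main bookkeeping obstacle is precisely this passage between $L$ and its dual: the invariant one-forms are naturally fixed covectors of $L^*$ rather than fixed vectors of $L$, so I must justify the coincidence of dimensions rather than assert a canonical identification with the fixed space of $L$ itself.
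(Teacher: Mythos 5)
Your proposal is correct and follows essentially the same route as the paper: identify reflexive $1$-forms with $G$-invariant translation-invariant $1$-forms on $\C^n$, observe these have constant coefficients, and compute the invariants via the character formula $(\chi_L|\chi_{\mathbb{I}_G})$. If anything, you are more careful than the paper, which writes $\sum a_i\,dz_i = d(\sum a_i z_i)\eqdef dv$ and silently identifies the covector $v$ with a vector fixed by $L(g)$, whereas you explicitly address that the natural action on $1$-forms is the contragredient $L^*$ and justify the equality of multiplicities via $\chi_{L^*}=\overline{\chi_L}$ and integrality.
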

\begin{proof}
The space $\H^0(X,\,\Omega^{[1]}_X)$ lifts to a space of holomorphic $1$-forms on $\C^n$.
A translation-invariant $1$-form on $\C^n$ can be written as $\sum a_i dz_i=d(\sum a_iz_i)\eqdef dv$.
Such a form is $G$-invariant if and only if for all $g\in G$ we have $L(g)^*dv=d(L(g)(v))=dv$, which is equivalent to $v$ being fixed by $L(g)$ for all $g\in G$.
By Maschke's Theorem $L$ splits into a direct sum of irreducible representations and a common fixed vector $v$ corresponds to to a trivial direct summand of $L$.
In terms of characters, the multiplicity of $\mathbb{I}_G$ can be calculated via the claimed formula \cite[Theorem~4]{Ser77}.
\end{proof}

\begin{lem}[Symplectic torus quotients]
\label{h2}
Let $X=T/G$ be an $n$-dimensional torus quotient with induced analytic representation $L$.
Then $\H^0(X,\,\Omega^{[2]}_X)$ is spanned by a symplectic form on $X$ if and only if $L$ is either an irreducible representation of quaternionic type or the direct sum of two complex conjugate irreducible representations of real or complex type.
In every case it follows that $\h^1(X,\,\O_X)=0$ except when $X$ is a $2$-torus.
\end{lem}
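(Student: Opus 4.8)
The plan is to translate the statement into the representation theory of the finite group $G$ and then read everything off from the Frobenius--Schur indicator. Write $U=\C^n$ for the universal cover of $T$ with its $G$-action $L$; the translation-invariant holomorphic $2$-forms are $\bigwedge^2 U^*$, so by the identification $\H^0(X,\,\Omega^{[2]}_X)\cong\H^0(T,\,\Omega^2_T)^G$ recorded above this space equals $(\bigwedge^2 U^*)^G$. A symplectic form is an $\omega\in(\bigwedge^2 U^*)^G$ that is non-degenerate, i.e.~whose contraction $U\to U^*$, $v\mapsto\omega(v,\cdot)$, is an isomorphism; being $G$-equivariant it forces $L\cong L^*$, so $L$ is self-dual. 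Since $\bigwedge^2 U^*\cong(\bigwedge^2 U)^*$ has the same invariant dimension as $\bigwedge^2 U$, and since the Frobenius--Schur type and the conjugate-pairing of irreducible constituents agree for $L$ and $L^*$, I would carry out the classification for $L$ itself.

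First I would count invariants. From $\chi_{\bigwedge^2 L}(g)=\tfrac12(\chi_L(g)^2-\chi_L(g^2))$ one gets
\[
\dim(\bigwedge^2 L)^G=\tfrac12\Big[(\chi_L^2\,|\,\mathbb{I}_G)-\tfrac1{|G|}\sum_{g\in G}\chi_L(g^2)\Big].
\]
Decomposing $L=\bigoplus_i m_i\rho_i$ into distinct irreducibles and using the standard facts that $(\rho_i\tensor\rho_j)^G$ is $1$ exactly when $\rho_j\cong\rho_i^*$, and that $(\Sym^2\rho_i)^G$ resp.~$(\bigwedge^2\rho_i)^G$ detect the real resp.~quaternionic type via $\iota(\rho_i)$, self-duality makes the complex-type constituents occur in conjugate pairs and the count rewrites as
\[
2\dim(\bigwedge^2 L)^G=\sum_{\rho_i\text{ real}}m_i(m_i-1)+\sum_{\rho_i\text{ quat.}}m_i(m_i+1)+\sum_{\{\rho_i,\rho_i^*\}\text{ cplx}}2m_im_{i'},
\]
a sum of non-negative integers.

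The heart of the argument is to impose that this space be one-dimensional and spanned by a \emph{non-degenerate} form. Setting the sum equal to $2$ forces exactly one summand to be $2$ and all others $0$, giving three blocks: a real-type $\rho$ with $m=2$, a quaternionic $\rho$ with $m=1$, or a complex-conjugate pair $\rho\oplus\rho^*$ each of multiplicity $1$. The step I expect to be the main obstacle is that a real-type irreducible occurring with multiplicity $1$ contributes $0$ to the sum yet is present in $L$: it is invisible to the dimension count but lies in the kernel of every invariant $2$-form, since its isotypic part pairs (via $\omega$) only with itself, where the invariant pairing is symmetric, hence $\omega$-orthogonal to everything. It is precisely non-degeneracy that excludes such spurious summands and pins $L$ down to one of the three listed cases. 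For the converse I would exhibit the generator and verify non-degeneracy by Schur's lemma: in the quaternionic case the unique invariant alternating form is an isomorphism $U\cong U^*$; in the $\rho\oplus\rho^*$ case it is the canonical pairing between $\rho$ and $\rho^*$; in the real case $L=2\rho$ it is $\omega\big((x_1,y_1),(x_2,y_2)\big)=b(x_1,y_2)-b(x_2,y_1)$ for the invariant symmetric form $b$ on $\rho$, whose block matrix $\left(\begin{smallmatrix}0&b\\-b&0\end{smallmatrix}\right)$ has determinant $\det(b)^2\neq0$.

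Finally, for the irregularity I would use that Hodge symmetry on the torus $T$ descends to the quotient and gives $\h^1(X,\,\O_X)=\h^0(X,\,\Omega^{[1]}_X)$, which by Lemma~\ref{h1} equals the multiplicity of $\mathbb{I}_G$ in $L$. In the quaternionic and the complex-type cases $L$ has no trivial constituent, since a quaternionic or complex-type irreducible is never trivial; hence $\h^1(X,\,\O_X)=0$. In the real case $L=2\rho$ the trivial representation occurs only if $\rho=\mathbb{I}_G$, but then $L=2\,\mathbb{I}_G$, faithfulness forces $G$ trivial and $n=2$, so $X=T$ is a $2$-torus; this is the single exceptional case, where $\h^1(X,\,\O_X)=2\neq0$. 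This yields the claimed dichotomy.
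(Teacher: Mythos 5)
Your proof is correct, and while it runs on the same engine as the paper's argument --- counting invariants in $\bigwedge^2\C^n$ via the Frobenius--Schur indicator, with non-degeneracy of $\omega$ ruling out stray summands --- it is organized in a genuinely different way. The paper proceeds structurally: the irreducible case is handled by Serre's criterion (an irreducible representation preserves a non-zero alternating form if and only if it is quaternionic, and then the form is automatically non-degenerate and unique up to scalar); in the reducible case $L=L_1\oplus L_2$ it uses $\h^0(X,\,\Omega^{[2]}_X)=1$ together with non-degeneracy to force $\omega$ into the mixed block $V_1\otimes V_2$, bootstraps irreducibility of $L_1,L_2$ by repeating that argument, and only then runs a character computation to conclude $L_1=\overline{L_2}$ of real or complex type. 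You instead perform one global multiplicity count over the full isotypic decomposition and prune afterwards; your pruning step --- real-type constituents of multiplicity one are invisible to the count but are excluded by non-degeneracy, since their isotypic part can pair only with itself and the invariant pairing there is symmetric --- is exactly the content carried by the paper's localization step, and your enumeration makes the completeness of the case list more transparent (note that real type gives $\rho\cong\overline{\rho}$, so your block $L=2\rho$ is precisely the paper's ``two complex conjugate summands of real type''). You also verify the converse more explicitly than the paper does, via Schur's lemma in the quaternionic case, the canonical pairing on $\rho\oplus\rho^*$, and the non-degenerate block form in the real case. For the irregularity you replace the paper's route (rational singularities, extension of reflexive forms to a resolution, Hodge symmetry there, cf.\ the proof of \cite[Proposition~6.9]{GKP16}) by equivariant Hodge symmetry on $T$ itself: $\h^1(X,\,\O_X)=\dim\H^1(T,\,\O_T)^G=\dim\H^0(T,\,\Omega^1_T)^G$, which is legitimate because $\O_X\cong(\pi_*\O_T)^G$ with $\pi$ finite, taking $G$-invariants is exact in characteristic zero, and conjugation on $\H^1(T,\,\C)=\H^1(T,\,\Z)\otimes\C$ commutes with the $G$-action; this is more elementary than the paper's argument, at the cost of being special to global quotients, whereas the paper's version would apply to symplectic varieties that are not of this form. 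One small point: in the exceptional case $L=2\,\mathbb{I}_G$ you appeal to faithfulness of $L$ to conclude $X=T$; without the normalization made in Section~\ref{ctq} the group $G$ could still act by non-trivial translations, but then $X$ is nevertheless a complex $2$-torus, which is all the lemma asserts.
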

\begin{proof}
An irreducible representation $L$ preserves a non-trivial alternating bilinear form if and only if $L$ is of quaternionic type;
in this case this form is non-degenerate and unique up to a scalar \cite[Propositions~38(b)]{Ser77}.

We suppose now that $L$ is a reducible representation $L=L_1\oplus L_2$, where the $L_i\colon G\to\GL_{\C}(V_i)$ are sub-representations of positive degrees and $\C^n=V_1\oplus V_2$ is a decomposition invariant under $L(g)$ for all $g\in G$.
Then the decomposition $\bigwedge^2\C^n\cong \left(\bigwedge^2 V_1\right)\oplus (V_1\otimes V_2)\oplus\left(\bigwedge^2 V_2\right)$ is invariant under $\wedge^2L(g)$ for all $g\in G$.
When there is a $G$-invariant $2$-form $\omega$ on $X$, its three components are also $G$-invariant.
Hence $\H^0(X,\,\Omega^2_X)=\C\omega$ implies that only one component is non-zero.
As $\omega$ is symplectic it is non-degenerate, such that $\omega$ has to lie in the mixed part $V_1\otimes V_2$.
By the same argument applied to sub-representations, $L_1,L_2$ are seen to be irreducible.
Clearly neither $L_1$ nor $L_2$ can be of quaternionic type, as then we had $\h^0(X,\,\Omega^2_X)\ge2$.
Using \cite[Proposition~3 and Theorem~4]{Ser77} we can calculate
\begin{align*}
1=\h^0(X,\,\Omega^{[2]}_X)&=(\chi_{\wedge^2L}|\chi_{\mathbb{I}_G})\\
&=\frac{1}{2|G|}\sum_{g\in G}(\chi_{L_1}(g)+\chi_{L_2}(g))^2-(\chi_{L_1}(g^2)+\chi_{L_2}(g^2))\\
&=\frac{1}{2}(\chi_{L_1}|\overline{\chi_{L_1}})+(\chi_{L_1}|\overline{\chi_{L_2}})+\frac{1}{2}(\chi_{L_2}|\overline{\chi_{L_2}})-\frac{1}{2}\iota(L_1)-\frac{1}{2}\iota(L_2)\big)\\
&=\begin{cases}
(\chi_{L_1}|\overline{\chi_{L_2}})&\text{if $L_1$ and $L_2$ are both real or both complex}\\
0&\text{else}
\end{cases}
\end{align*}
and conclude $L_1=\overline{L_2}$.

By Lemma~\ref{h1} we get $\h^0(X,\,\Omega^{[1]}_X)=(\chi_L|\chi_{\mathbb{I}_G})$. This is zero unless $L$ is reducible and $L_1,L_2$ are both trivial, which is precisely the case when $T$ and $X$ are both $2$-tori. In any case, as $X$ has rational singularities every reflexive form on $X$ extends to a resolution where we can apply Hodge symmetry. Hence $\h^1(X,\,\O_X)=\h^0(X,\,\Omega^{[1]}_X)$ (cf. the proof of \cite[Proposition~6.9]{GKP16}) and the claim follows.
\end{proof}

\section{Smooth complex torus quotients}
\label{sctq}
A complex torus quotient $X=T/G$ is smooth if and only if every $g\in G$ acts fixed point free on $T$.
We call $X$ a \emph{smooth (complex) torus quotient} in that case.
In the literature smooth torus quotients also appear under the names \emph{(generalized) hyperelliptic manifolds/varieties} as they generalize hyperelliptic surfaces, despite not being a generalization of the notion of hyperelliptic curves.

\subsection*{Decomplexification}

For a complex representation $\rho\colon G\to \GL_{\C}(\C^n)$ its \emph{decomplexification} $\rho_{\R}\colon G\to\GL_{\R}(\R^{2n})$ is obtained by forgetting the complex structure on $\C^n$.
We get $\C\otimes\rho_{\R}\cong\rho\oplus\bar{\rho}$, hence two irreducible complex representations are $\R$-equivalent if and only if they are $\C$-equivalent up to a possible complex conjugation.

\subsection*{Holonomy representation}

Let $X=T/G$ be a smooth complex torus quotient with $T=\C^n/\Lambda$ for a complete lattice $\Lambda\subset\C^n$ and induced analytic representation $L$.
Then $L(g)$ acts on $\Lambda$ for every $g\in G$.
The induced actions of $G$ on $\Q\otimes_{\Z}\Lambda$ or $\R\otimes_{\Z}\Lambda$ are called rational or real \emph{holonomy representation}, respectively.
The real holonomy representation equals the decomplexification of the analytic representation $L$ of $G$ on $\C^n$.

\subsection*{Homogeneous representations}
When $K$ is a field, we call a representation $\rho$ of $G$ over $K$ \emph{homogeneous} if all irreducible subrepresentations of $\rho$ are equivalent. When $\rho$ is a complex representation, then its decomplexification $\rho_{\R}$ is homogeneous if and only if all irreducible subrepresentations of $L$ are equivalent up to a possible complex conjugation.

\begin{lem}
\label{rational}
Let $\rho$ be a rational representation of a finite group and $K\supset\Q$ a field. When $K\otimes\rho$ is homogeneous then $\rho$ is homogeneous as well.
\end{lem}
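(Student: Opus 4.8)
The plan is to prove the contrapositive: if $\rho$ fails to be homogeneous over $\Q$, then $K\otimes\rho$ fails to be homogeneous over $K$. By definition, non-homogeneity of $\rho$ means that among the irreducible $\Q$-subrepresentations of $\rho$ there are two inequivalent ones, say $V_1\not\isom V_2$ (each an irreducible $\Q G$-module sitting inside $\rho$). Since $K\supset\Q$ has characteristic zero, Maschke's theorem still applies over $K$, so ``homogeneous over $K$'' may be tested on irreducible subrepresentations; it therefore suffices to exhibit two inequivalent irreducible $K$-subrepresentations of $K\otimes\rho$.

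The key step is to show that extension of scalars keeps the constituents of $V_1$ and $V_2$ disjoint, i.e.\ that $\operatorname{Hom}_{KG}(K\otimes V_1,\,K\otimes V_2)=0$. Schur's lemma gives $\operatorname{Hom}_G(V_1,V_2)=0$ already over $\Q$, and I would propagate this across the field extension as follows. Writing $W\defeq\operatorname{Hom}_\Q(V_1,V_2)$ with its natural $G$-action, one has $\operatorname{Hom}_G(V_1,V_2)=W^G$, and the averaging idempotent $e=\frac{1}{|G|}\sum_{g\in G}g$ projects $W$ onto $W^G$. Because $|G|$ is invertible in $\Q$, this idempotent is defined over $\Q$ and is unchanged by base change, whence $(W^G)\otimes_\Q K\isom(W\otimes_\Q K)^G$. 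Combining this with the canonical identification $\operatorname{Hom}_K(K\otimes V_1,\,K\otimes V_2)\isom K\otimes_\Q W$ of $G$-modules for finite-dimensional $V_i$, we obtain $\operatorname{Hom}_{KG}(K\otimes V_1,\,K\otimes V_2)\isom K\otimes_\Q\operatorname{Hom}_G(V_1,V_2)=0$.

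Vanishing of this space means that $K\otimes V_1$ and $K\otimes V_2$ share no irreducible $K$-constituent. As both are nonzero, picking one irreducible summand from each produces two inequivalent irreducible subrepresentations of $K\otimes\rho$, so $K\otimes\rho$ is not homogeneous, which finishes the contrapositive. The only genuine obstacle is the base-change identity for $G$-homomorphisms in the previous paragraph; everything else is formal manipulation with the definitions. That step is entirely driven by the existence of the averaging projector over $\Q$, so the argument uses nothing beyond characteristic zero and the finiteness of $G$, and would equally well be phrased via the Wedderburn blocks of $\Q G$, whose primitive central idempotents remain orthogonal in $KG$.
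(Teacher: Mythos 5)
Your proof is correct, but it takes a genuinely different route from the paper's. The paper argues directly with characters: given two irreducible subrepresentations $\eta_1,\eta_2$ of $\rho$, homogeneity of $K\otimes\rho$ forces each $K\otimes\eta_i$ to be a power of one and the same irreducible $K$-representation, hence the $K\otimes\eta_i$ have the same character; since $\chi_{K\otimes\eta_i}=\chi_{\eta_i}$ and in characteristic zero a representation is determined by its character, $\eta_1\cong\eta_2$. You instead prove the contrapositive at the level of modules: Schur's lemma gives $\operatorname{Hom}_{\Q G}(V_1,V_2)=0$ for inequivalent irreducibles, and the base-change identity $\operatorname{Hom}_{KG}(K\otimes V_1,\,K\otimes V_2)\cong K\otimes_{\Q}\operatorname{Hom}_{\Q G}(V_1,V_2)$, justified by the averaging idempotent, shows the extended modules share no irreducible constituent, so $K\otimes\rho$ acquires two inequivalent irreducible subrepresentations. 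Each approach buys something. Your version quietly sidesteps a point the paper leaves implicit: from ``both $K\otimes\eta_i$ are powers $\sigma^{m_i}$ of the same irreducible'' the paper concludes they are \emph{equivalent}, which requires knowing $m_1=m_2$ (true, via the Schur-index interpretation of the multiplicities, or because proportional rational irreducible characters must coincide) --- your contrapositive never touches multiplicities at all. It is also more robust: relying only on invertibility of $|G|$ and semisimplicity rather than on characters determining representations, it would work verbatim for extensions of any base field of characteristic prime to $|G|$, where trace characters can fail to separate representations. The paper's argument, in exchange, is shorter and stays entirely within the character-theoretic toolkit ([Ser77]) already used throughout Section~4.
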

\begin{proof}
Let $\eta_1,\eta_2$ be two irreducible subrepresentations of $\rho$. Then the $K\otimes\eta_i$ are both powers of the same irreducible subrepresentation of $K\otimes\rho$.
Hence the $K\otimes\eta_i$ are equivalent and thus have the same characters.
As $\chi_{K\otimes\eta_i}=\chi_{\eta_i}$, it follows that the $\eta_i$ are equivalent.
Hence $\rho$ is homogeneous.
\end{proof}

\begin{thm}
\label{homo}
Let $X=T/G$ be a smooth torus quotient with induced analytic representation $L$. If the decomplexification $L_{\R}$ is \emph{homogeneous} then $L$ is trivial and $X$ is a complex torus.
\end{thm}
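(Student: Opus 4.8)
The plan is to realise $X$ as a flat manifold and exploit the rigidity of Bieberbach groups. Since $X=T/G$ is smooth, $G$ acts freely on $T=\C^n/\Lambda$, so the universal cover of $X$ is $\C^n$ and $\pi_1(X)$ is a torsion-free crystallographic group $\Pi$ sitting in an extension $1\to\Lambda'\to\Pi\to H\to1$. By Bieberbach's theorems the translation lattice $\Lambda'$ is the unique maximal normal abelian subgroup, so after absorbing the purely translational elements of $G$ into the lattice I may assume that the holonomy group $H$ acts faithfully, with real holonomy representation equal to $L_\R$. Thus proving that $L$ is trivial is the same as proving $H=1$; the conclusion that $X$ is then a torus is immediate, as $G$ acts by translations.

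Next I would move the homogeneity hypothesis into the arithmetic setting where Bieberbach theory applies. The real holonomy representation is the decomplexification $L_\R=\R\tensor\rho_\Q$ of the rational holonomy representation $\rho_\Q$ on $\Q\tensor_\Z\Lambda'$. If $L_\R$ is homogeneous, then Lemma~\ref{rational} gives that $\rho_\Q$ is homogeneous as well, say $\rho_\Q\cong V^{\oplus k}$ for a single irreducible rational representation $V$; faithfulness of $\rho_\Q$ forces $V$ to be faithful. Arguing by contradiction, I assume $H\neq1$, so $V$ is nontrivial. Freeness of the $G$-action shows that every $g\neq e$ has a nonzero fixed vector, i.e.\ $L_\R(g)$, and hence $V$ restricted to $\langle g\rangle$, has eigenvalue $1$; in particular $\rho_\Q$ has no global invariants, so $b_1(X)=0$, which is why no naive Euler characteristic obstruction is available.

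The hard part is that the eigenvalue condition alone is \emph{not} enough: the finite rotation groups $A_4,S_4,A_5\subset\mathrm{SO}(3)$ furnish faithful irreducible real representations in which every element fixes an axis, yet none of them arises as the holonomy representation of a flat $3$-manifold. What must be used is the full torsion-freeness of $\Pi$, which is a genuinely cohomological constraint: for each subgroup $\langle h\rangle$ of prime order the restricted extension class in $H^2(\langle h\rangle,\Lambda')\cong(\Lambda')^{\langle h\rangle}/N_{\langle h\rangle}\Lambda'$ must be non-zero, and all these local classes must be compatible with one global class in $H^2(H,\Lambda')$. I would show that a homogeneous faithful $\rho_\Q$ cannot support such a compatible system, which is exactly the point where I invoke the classification results of Hiss--Szczepański and Lutowski on holonomy representations of flat manifolds: a homogeneous faithful representation does not occur as the holonomy representation of a torsion-free crystallographic group. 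This contradiction forces $H=1$, hence $L$ trivial and $X$ a complex torus. The main obstacle is therefore precisely this torsion-free step, where the local eigenvalue information has to be upgraded to a global obstruction via the cited results.
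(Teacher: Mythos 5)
Your proposal is correct and takes essentially the same route as the paper's proof: you reduce to a faithful holonomy representation by absorbing the translational elements, transfer homogeneity from $\R$ to $\Q$ via Lemma~\ref{rational}, and then invoke the Hiss--Szczepa\'nski/Lutowski results, which is exactly the published argument. Your additional discussion of the Bieberbach extension, the eigenvalue-$1$ condition, and the $\H^2$ obstruction is sound motivation (and correctly identifies why the eigenvalue condition alone is insufficient), but the decisive step remains the same citation of \cite[Theorem~1]{Lut18}.
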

\begin{proof}
Let $N\defeq\ker L$ the set of $g\in G$ acting via translations on $T$.
This is a normal subgroup of $G$ and we get $X\cong (T/N)/(G/N)$.
Hence we may assume without loss of generality that no $g\in G$ acts via a non-trivial translation on $T$.
Equivalently, $L$ is a faithful representation.
When $L_{\R}$ is homogeneous, then the rational holonomy representation is also homogeneous by Lemma~\ref{rational}.
Generalizing results from Hiss--Szczepa\'nski \cite[p.40, Corollary]{HS91}, Lutowski proved that if the rational holonomy representation is homogeneous, then $X$ is a complex torus \cite[Theorem~1]{Lut18}.
\end{proof}

\begin{cor}
\label{primitive}
Every smooth symplectic torus quotient with $\h^0(X,\,\Omega^2_X)=1$ is a complex $2$-torus.
\end{cor}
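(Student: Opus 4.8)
The plan is to read off the statement directly from the two main structural results of the preceding section: the classification of the analytic representation in Lemma~\ref{h2} and the triviality criterion in Theorem~\ref{homo}. The key observation is that in \emph{both} alternatives permitted by Lemma~\ref{h2} the decomplexification $L_{\R}$ is automatically homogeneous, so that Theorem~\ref{homo} applies and forces $X$ to be a complex torus; a one-line dimension count then pins down the dimension.

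Concretely, I would first invoke Lemma~\ref{h2}: since $(X,\omega)$ is a symplectic torus quotient with $\h^0(X,\,\Omega^2_X)=1$, its analytic representation $L$ is either an irreducible representation of quaternionic type or a direct sum $L_1\oplus\overline{L_1}$ of two complex conjugate irreducibles of real or complex type. Next I would verify homogeneity of $L_{\R}$ in each case, using the criterion stated just before Lemma~\ref{rational}, namely that $L_{\R}$ is homogeneous exactly when all irreducible subrepresentations of $L$ coincide up to a possible complex conjugation. In the quaternionic case $L$ is irreducible, so this holds trivially; in the reducible case the two constituents are $L_1$ and $\overline{L_1}$, which are complex conjugates of one another and hence $\R$-equivalent. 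Thus $L_{\R}$ is homogeneous in either case. Finally I would apply Theorem~\ref{homo}: homogeneity of $L_{\R}$ implies that $L$ is trivial and that $X$ is a complex torus. Since a complex torus of dimension $n$ satisfies $\h^0(X,\,\Omega^2_X)=\binom{n}{2}$, the hypothesis $\h^0=1$ gives $n=2$, so $X$ is a complex $2$-torus.

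I do not expect a genuine obstacle here, as the substantive content is already contained in the cited Lemma~\ref{h2} and Theorem~\ref{homo}; the corollary is essentially the point where they are combined. The only step requiring (minor) care is correctly recognizing that a complex conjugate pair of irreducibles becomes a homogeneous real representation after decomplexification, so that the classification of Lemma~\ref{h2} feeds cleanly into the hypothesis of Theorem~\ref{homo}. As a sanity check, note that the conclusion ``$L$ trivial'' is incompatible with $L$ being irreducible of quaternionic type or with $L_1$ being of complex type, so in fact only the real-type reducible case survives for a smooth quotient, with $L_1$ the trivial degree-$1$ representation; this is exactly the $2$-torus already produced by the dimension count.
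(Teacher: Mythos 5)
Your proposal is correct and follows exactly the paper's own argument: apply Lemma~\ref{h2} to classify $L$, observe that $L_{\R}$ is homogeneous in both cases (using that complex conjugate irreducibles are $\R$-equivalent, so $L_{\R}\cong(L_1)_{\R}^{\oplus2}$ in the reducible case), conclude via Theorem~\ref{homo} that $X$ is a complex torus, and finish with the dimension count $1=\binom{n}{2}$, giving $n=2$. Your closing sanity check is a small addition beyond the paper's text, but the substance of the proof is identical.
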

\begin{proof}
Let $X=T/G$ be a symplectic torus quotient with $\h^0(X,\,\Omega^{2}_X)=1$.
Then by Lemma~\ref{h2} its analytic representation $L\colon G\to\GL_{\C}(C^n)$ is either irreducible or the direct sum of two complex conjugate irreducible representations $L_i\colon G\to\GL_{\C}(V_i)$, $i=1,2$.
In the second case, the decomplexifications of the $L_i$ are equivalent as real representations, so $L_{\R}\cong (L_1)_{\R}^{\oplus 2}$.
In particular $L_{\R}$ is homogeneous in both cases.
By Theorem~\ref{homo} it follows that $X$ can only be smooth if it is a complex torus.
For $n\defeq\dim X$ this implies $1=\h^0(X,\,\Omega^{[2]}_X)=\frac{n(n-1)}{2}$ and thus $n=2$.
\end{proof}

The proof of the main theorem is now easy:

\begin{proof}[Proof of Theorem~\ref{ihs}]
Let $X$ be a compact Kähler manifold such that $\H^0(X,\,\Omega^2_X)\cong\C$ is generated by a holomorphic symplectic form $\omega$.
If $\h^1(X,\,\O_X)=0$ then $X$ is by Proposition~\ref{simplyconnected} simply connected or a smooth torus quotient.
In the latter case it would be a $2$-torus by Corollary~\ref{primitive}, for which $\h^1(X,\,\O_X)=2\neq0$.
For the other direction, if $X$ is simply connected, then $\H^1(X,\,\C)=\pi_1(X)^{ab}$ is trivial and hence $\h^1(X,\,\O_X)=0$ by Hodge theory.
\end{proof}

\begin{rem}
We invoked the results of Hiss--Szczepa\'nski and Lutowski, which rely on the classification of finite simple groups.
The author of the present paper wondered if this line of reasoning can be replaced by more direct arguments.
We give examples showing that the naivest attempts fail.
\end{rem}

Let $X=T/G$ be a smooth torus quotient and $L$ its induced analytic representation. For each $g\in G$ to act fixed point free on $T$ it is necessary that $L(g)$ has $1$ as an eigenvalue, cf. \cite[Lemma~13.1.1.]{BiLa}.
Furthermore, finite groups $G$ for which a smooth torus quotient $X=T/G$ with $\h^1(X,\,\O_X)=0$ exists are \emph{primitive} in the terminology of Hiller--Sah \cite{HS86}.
The authors of the latter paper gave an equivalent criterion for a finite group to be primitive, which is also easy to compute with a computer \cite[Theorem~3.8]{HS86}.
It follows from Lemma~\ref{h2} that $G$ needed to be primitive if there was a non-trivial smooth torus quotient $X=T/G$ that is symplectic with $\h^0(X,\,\Omega^{2}_X)=1$.

\begin{rem}
There are examples of irreducible representations $L$ --- of any type --- of primitive finite groups $G$ such that all representing matrices have $1$ as an eigenvalue, see Section~\ref{examples}.
They give rise to examples of singular symplectic torus quotients $X=T/G$ with $\h^1(X,\,\O_X)=0$ and $\h^0(X,\,\Omega^{[2]}_X)=1$, but by Corollary~\ref{primitive} no set of translations $t(g)$ for $g\in G$ can be chosen to make the induced action of $G$ on $T$ free.

However, smooth symplectic torus quotients with $\h^1(X,\,\O_X)=0$ exist indeed. The smallest example was very recently discussed by Hiss--Lutowski--Szczepa\'nski \cite{HLS20} and comes from the direct sum of three non-equivalent irreducible representations of quaternionic type, hence $\h^0(X,\,\Omega^{[2]}_X)=3$ in that case.
\end{rem}

\section{Lagrangian fibrations of symplectic torus quotients}

\begin{defi}
Let $(X,\omega)$ be a symplectic variety and $f\colon X\to B$ a surjective morphism with connected fibers onto a normal Kähler variety $B$.
If the general fiber $F$ of $f$ has dimension $\frac{1}{2}\dim X$ and $\omega|_{F_{reg}}$ vanishes\footnote{Note that $F$ has canonical singularities with $F_{reg}=F\cap X_{reg}$, hence $\omega|_{F_{reg}}$ is well defined as a reflexive form on $F$, \cite[Lemma~28]{Sch17}}, then $f$ is called a \emph{Lagrangian fibration} of $X$.
\end{defi}

Matsushita gave an example of a Lagrangian fibration of a symplectic torus quotient $X$ with $\h^0(X,\,\Omega^{[2]}_X)=1$ over another singular torus quotient $B$, \cite[p.~7f]{Mat01}. 
This is interesting, as when $X$ is a symplectic variety of type (b), e.g. an irreducible symplectic manifold, then the base of a Lagrangian fibration is always Fano, and when it is smooth, it is biholomorphic to $\P^{\frac{1}{2}\dim X}$, \cite[Theorem~3]{Sch17} building upon \cite[Theorem~2~(3)]{Mat01} \cite[Theorem~1.2]{Hwa08}.

It is still open if for a Lagrangian fibration of a symplectic variety $X$ with $\h^1(X,\,\O_X)=0$, $\h^0(X,\,\Omega^{[2]}_X)=1$ over a smooth base $B$ we always have $B\cong\P^{\frac{1}{2}\dim X}$.
An attempt to construct a counterexample would be to find a variation of Matsushita's example with a smooth base.
However, we show in Theorem~\ref{lag2} that this is impossible.
We start with a minor variation of \cite[Corollary~2.2.]{DHP08}.

\begin{pro}
\label{lag1}
Let $X=T/G$ be an $n$-dimensional symplectic torus quotient with induced analytic representation $L$ and a surjective morphism $f\colon X\to B$ onto an $m$-dimensional normal complex variety $B$ with $0<m<n$.
Then there is an $(n-m)$-dimensional linear subspace $V\subset\C^{n}$ that is invariant under $L(g)$ for all $g\in G$ and the general fiber of the projection $\C^{n}\to B$ is a translation of $V$.
\end{pro}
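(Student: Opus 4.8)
The plan is to reduce the statement to the torus $T$ itself, apply the cited result \cite[Corollary~2.2.]{DHP08} to the induced torus fibration, and then upgrade the resulting subspace to a $G$-invariant one by exploiting that $f$ is defined on the quotient $X=T/G$ rather than on $T$.

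First I would precompose $f$ with the quotient map $\pi\from T\to X$ to obtain a surjective morphism $\tilde{f}\defeq f\circ\pi\from T\to B$ whose general fibers have dimension $n-m$, since $\pi$ is finite and $\dim T-\dim B=n-m$. Lifting along the universal covering $q\from\C^{n}\to T$ yields a holomorphic map $p\defeq\tilde{f}\circ q\from\C^{n}\to B$ whose general fibers are disjoint unions of $(n-m)$-dimensional analytic sets. If \cite[Corollary~2.2.]{DHP08} is stated for fibrations, I would first pass to the Stein factorization of $\tilde{f}$, replacing $B$ by a normal variety finite over the original base; this changes neither $m$ nor the fibers up to their finitely many components, so the conclusion below transfers verbatim to the original $B$.

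Next I would apply \cite[Corollary~2.2.]{DHP08} to the torus fibration $\tilde{f}\from T\to B$. It provides a subtorus $S\subset T$ of dimension $n-m$ such that the general fibers of $\tilde{f}$ are translates of $S$. Writing $V\subset\C^{n}$ for the $(n-m)$-dimensional linear subspace with $q(V)=S$, the preimage $q\inv$ of a translate of $S$ is a union of translates of $V$, so every connected component of a general fiber of $p$ is a translation of $V$. This is the substantive input: it rests on the structure theory of subvarieties of complex tori, which forces the fibers of a fibration of $T$ to be parallel translates of a fixed subtorus.

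It remains to see that $V$ is invariant under $L(g)$ for every $g\in G$. Each $g\in G$ lifts to the affine map $\hat{g}\from\C^{n}\to\C^{n}$, $\hat{g}(z)=L(g)z+t(g)$, and by construction $q\circ\hat{g}=g\circ q$, while $\tilde{f}\circ g=f\circ\pi\circ g=f\circ\pi=\tilde{f}$, since $\pi$ identifies $G$-orbits. Hence
\[
  p\circ\hat{g}=\tilde{f}\circ q\circ\hat{g}=\tilde{f}\circ g\circ q=\tilde{f}\circ q=p,
\]
so $\hat{g}$ maps every fiber of $p$ to itself. If $z_{0}+V$ is a component of a general fiber, then $\hat{g}(z_{0}+V)=L(g)z_{0}+t(g)+L(g)V$ is again a component of the same fiber and hence a translation of $V$; as both $V$ and $L(g)V$ are linear subspaces through the origin, the equality of their translates forces $L(g)V=V$. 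This holds for all $g\in G$, giving the desired $L$-invariant $(n-m)$-dimensional subspace $V$ whose translates are the general fibers of $p\from\C^{n}\to B$. The main obstacle is the reduction to the hypotheses of \cite[Corollary~2.2.]{DHP08}, namely handling the possibly disconnected fibers via the Stein factorization and accommodating a base $B$ that is only normal rather than smooth; once the torus statement is in force, the $G$-equivariance argument is purely formal.
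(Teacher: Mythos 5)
Your proposal is correct in substance, but it resolves the crucial step differently from the paper. Structurally the two arguments coincide: lift $f$ to $\tilde f=f\circ\pi\from T\to B$, pass to the Stein factorization to get connected fibers, identify the general fibers as translates of a single $(n-m)$-dimensional subtorus, and then deduce $L(G)$-invariance of the direction space $V$ by a formal equivariance argument (your version, using that the affine lifts $\hat g$ preserve each fiber of $p=\tilde f\circ q$ and comparing cosets to get $L(g)V=V$, is essentially the paper's argument that $G$ permutes the affine subspaces $p^{-1}(E)$ and hence preserves their common direction space, and it is correct). The genuine difference is the middle step: you import ``general fibers of a fibration of a torus over a normal base are translates of a fixed subtorus'' as a black box from \cite[Corollary~2.2]{DHP08}, applied upstairs to $T$ itself (legitimately, since a torus is a smooth torus quotient with trivial group), whereas the paper --- precisely because it presents the proposition as a \emph{variation} of that corollary rather than a consequence --- reproves this fact from scratch: generic smoothness over a Zariski-open $U\subset B_{reg}$ \cite[Theorems~1.19 and 1.22]{CV7}, triviality of the normal and hence tangent bundle of each fiber component, Wang's theorem to conclude the components are complex tori \cite[Corollary~2]{Wan54}, Ueno's deformation theorem to see each is a translate of a subtorus \cite[Theorem~10.3]{Ueno75}, and connectedness of $\hat\pi^{-1}(U)$ to fix one subtorus $T_0$. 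Your route buys brevity and cleanly separates the formal equivariance from the geometric input, but it stands or falls with the exact hypotheses of \cite{DHP08} (fibration with connected fibers, normality or Kähler assumptions on the base), which you correctly flag and partially handle via the Stein factorization; the paper's self-contained proof is exactly the insurance against that dependence.
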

\begin{proof}
We write $T=\C^{n}/\Lambda$ and denote by $p\colon\C^{n}\to T$ and $\pi\colon T\to X$ the projections.
There is a Zariski-open set $U\subset B_{reg}$, such that for all $b\in U$ the fiber $F=\pi^{-1}(f^{-1}(b))$ of $f\circ\pi$ is smooth of dimension $n-m$ and $(f\circ\pi)|_{\pi^{-1}(f^{-1}(U))}$ is a submersion of complex manifolds \cite[Theorems~1.19 and 1.22]{CV7}.
For each connected component $F_0$ of $F$ the normal bundle $\fN_{F_0|T}$ is trivial and by its defining exact sequence also the tangent bundle $\fT_{F_0}$.
Hence $F_0$ is a complex torus \cite[Corollary~2]{Wan54}.
Consider the Stein factorization of $f\circ\pi$, i.e a holomorphic map $\hat{f}\colon T\to\hat{B}$ with connected fibers onto a normal complex variety $\hat{B}$, and a finite map $\hat{\pi}$ such that $f\circ\pi=\hat{\pi}\circ{\hat{f}}$. 
The general fiber of $\hat{f}$ is a connected component of a general fiber of $f$ and thus an $(n-m)$-dimensional complex torus.
The restriction $\hat{f}|_{\hat{f}^{-1}(\hat{\pi}^{-1}(U))}$ is a submersion of complex manifolds with connected fibers, so a deformation for each of its fibers $E$.
It follows that each $E$ is a translation of an $(n-m)$-dimensional subtorus of $T$ \cite[Theorem~10.3]{Ueno75}.
Two subtori that differ only by a translation are equal.
Hence, as $\hat{\pi}^{-1}(U)$ is connected, all $E$ are translations of the same subtorus $T_0$ of $T$.
Then $V\defeq p^{-1}(T_0)$ is an $(n-m)$-dimensional linear subspace of $\C^{n}$.
The action of $G$ on $T$ permutes the fibers of $\hat{f}$, and thus the action of $G$ on $\C^{n}$ permutes the affine subspaces $p^{-1}(E)$.
Hence it follows that for all $g\in G$ the map $L(g)$ preserves $V$, which is the common direction space of the $p^{-1}(E)$ .
\end{proof}

\begin{thm}
\label{lag2}
Let $X=T/G$ be an $n$-dimensional symplectic torus quotient with $\h^0(X,\,\Omega^{[2]}_X)=1$, analytic representation $L$ and a Lagrangian fibration $f\colon X\to B$ onto a normal complex variety $B$.

Then $L$ is reducible and $f$ is induced by a projection onto one of the two $m$-dimensional linear subspaces that are invariant under $L(g)$ for all $g\in G$.
Hence $B$ is a torus quotient with irreducible analytic representation of real or complex type. In particular $B$ is singular unless $f$ is a fibration of a $2$-torus over an elliptic curve.
\end{thm}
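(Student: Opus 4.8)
The plan is to combine the representation-theoretic description of $L$ from Lemma~\ref{h2} with the geometric input of Proposition~\ref{lag1}, and then to read off singularity of the base from Theorem~\ref{homo}. First I would record that the Lagrangian condition forces $m=\dim B=\tfrac12\dim X=\tfrac n2$, so the general fibre has dimension $n-m=\tfrac n2$ as well. Applying Proposition~\ref{lag1} then produces an $L$-invariant subspace $V\subset\C^{n}$ of dimension $\tfrac n2$ whose translates are the fibre directions. Since $0<\dim V<n$, the representation $L$ admits a proper nonzero invariant subspace and is therefore reducible; this already gives the first assertion. By Lemma~\ref{h2} reducibility means $L=L_1\oplus L_2$ with $L_i\colon G\to\GL_{\C}(V_i)$ irreducible, $L_2\cong\overline{L_1}$ of real or complex type, and $\dim V_1=\dim V_2=\tfrac n2$.

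Next I would exploit the shape of the symplectic form. As in the proof of Lemma~\ref{h2}, the $G$-invariant form $\omega$ lies in the mixed summand $V_1\otimes V_2$ of $\bigwedge^{2}\C^{n}$; in particular it has no component in $\bigwedge^{2}V_1$ or $\bigwedge^{2}V_2$, so both $V_1$ and $V_2$ are isotropic, hence Lagrangian. The Lagrangian-fibration hypothesis gives $\omega|_{V}=0$, so $V$ is an $L$-invariant Lagrangian subspace of dimension $\tfrac n2$. In the complex-type case $L_1\not\cong L_2$, and by Schur's lemma the only invariant subspaces of $\C^{n}$ are $0,V_1,V_2,\C^{n}$; thus $V\in\{V_1,V_2\}$ and $f$ is induced by the projection onto the complementary factor. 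In the real-type case $L_1\cong L_2\cong W$, where there is instead a whole $\P^{1}$ of invariant half-dimensional subspaces; here I would argue that every quotient $\C^{n}/V$ is still isomorphic to $W$, so the description of $B$ does not depend on which $V$ occurs.

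I would then identify the base. The fibration factors as $T\to T/T_0\to B$, where $T_0\subset T$ is the subtorus with lift $V$; since $V$ is $G$-invariant, $G$ acts on $T/T_0$ and $B$ is the resulting torus quotient, whose analytic representation is the induced $G$-action on $\C^{n}/V$. By the previous paragraph this quotient is $\cong L_1$ or $L_2$ in the complex-type case and $\cong W$ in the real-type case, in each instance \emph{irreducible} of real or complex type. For the singularity statement I would feed this back into Theorem~\ref{homo}, using that the decomplexification of an irreducible complex representation is always homogeneous: for complex type $(L_B)_{\R}$ is irreducible, and for real type $(L_B)_{\R}\cong W_{0}^{\oplus 2}$ for a real irreducible $W_0$. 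Hence, were $B$ smooth, Theorem~\ref{homo} would force $L_B$ to be trivial and $B$ to be a torus; but an irreducible representation on which every group element acts as the identity is one-dimensional, so $\dim L_B=m=1$, making $B$ an elliptic curve with $n=2$. Triviality of $L_B\cong L_1$ (resp.\ $W$) then propagates to all of $L$, so $G$ acts by translations and $X$ is a $2$-torus. In every other case $B$ is singular.

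The step I expect to be the main obstacle is the middle one: matching the abstract fibre direction $V$ of Proposition~\ref{lag1} with the intrinsic decomposition $V_1\oplus V_2$ and checking that the fibration really is a projection. In the complex-type case Schur's lemma pins $V$ down to $V_1$ or $V_2$, but the real-type case is subtler, since $V$ is no longer unique and one must instead verify that the quotient representation governing $B$ is irreducible for \emph{every} admissible $V$. Once the base representation is known to be irreducible of real or complex type, the singularity dichotomy is a clean consequence of Theorem~\ref{homo}, the key point being that irreducibility of $L_B$ automatically yields homogeneity of its decomplexification.
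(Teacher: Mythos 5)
Your proposal is correct and follows essentially the same route as the paper: Proposition~\ref{lag1} yields the $L$-invariant half-dimensional subspace $V$, forcing $L$ reducible, Lemma~\ref{h2} then gives $L=L_1\oplus L_2$ with $L_2\cong\overline{L_1}$ of real or complex type, the base is the torus quotient with analytic representation induced on $\C^{n}/V$, and Theorem~\ref{homo} (via homogeneity of the decomplexification) gives the singularity dichotomy, with the trivial case collapsing to a $2$-torus over an elliptic curve. Your additional care in the real-type case --- noting the $\P^1$ of invariant half-dimensional subspaces and checking that every quotient $\C^{n}/V$ is still isomorphic to the irreducible $W$ --- is a sound refinement of the paper's terser step ``up to switching the indices, we have $V=V_1$'', which is itself justified because by semisimplicity the decomposition in Lemma~\ref{h2} can be rechosen so that $V$ is one of its summands.
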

\begin{proof}
Let $f\colon X\to B$ be a Lagrangian fibration of $X=T/G$ with $T=\C^{n}/\Lambda$. 
Then $\dim B=\frac{n}{2}$, \cite[Theorems~1.19]{CV7}.
By Proposition~\ref{lag1} there is an $\frac{n}{2}$-dimensional linear subspace $V\subset\C^{n}$ that is invariant under $L(g)$ for all $g\in G$.
In particular $L$ cannot be irreducible.
Hence by Lemma~\ref{h2} we get $\C^{n}=V_1\oplus V_2$ and $L=L_1\oplus L_2$ for two complex conjugate irreducible representation $L_i\colon G\to\GL_{\C}(V_i)$ of real or complex type.
Therefore, up to switching the indices, we have $V=V_1$ and $B$ is the quotient of $V_2/(\Lambda\cap V_2)$ modulo the restricted action of $G$, which has linear part $L_2$.
Thus the analytic representation of $B$ is irreducible of real or complex type.
Hence by Theorem~\ref{homo} or directly by \cite[Theorem~2]{Lut18} the base $B$ is singular unless $B$ is a complex torus.
The latter case is equivalent to $L_1$ and $L_2$ being trivial and thus $f$ being a fibration of a $2$-torus over an elliptic curve.
\end{proof}

\begin{rem}
It follows from Theorem~\ref{lag2} that every non-trivial Lagrangian fibration $X\to B$ of a symplectic torus quotient with $\h^0(X,\,\Omega^{[2]}_X)=1$ is built like Matsushita's example \cite[p.~7f]{Mat01}.
In particular $K_B$ is always torsion and there is no variant where the base $B$ is smooth.
In GAP Matsushita's example can be reproduced via the two faithful irreducible complex conjugate representations {\tt Irr(SmallGroup(108,22))[i]} for $i=17,18$.
\end{rem}

\section{Singular examples}
\label{examples}

Symplectic torus quotients $X$ with $\h^0(X,\,\Omega^{[2]}_X)=1$ up to $\dim X\leq4$ were extensively studied by Fujiki \cite{Fu82}.
To construct more examples one needs to search for irreducible representations $\rho$ of finite groups $G$.
For $\rho$ of quaternionic type consider $L=\rho$, otherwise $L=\rho\oplus\bar{\rho}$.
If $L$ preserves a complete lattice, which is not always the case, it is by Lemma~\ref{h2} the analytic representation induced by a symplectic torus quotient $X$ with $\h^0(X,\,\Omega^{[2]}_X)=1$.

Using the following functions one can check with GAP, \cite{GAP4}, if a finite group $G$ is primitive and if all representing matrices of its $i$-th irreducible representation have $1$ as an eigenvalue.

\begin{lstlisting}[frame=single,language=GAP,basicstyle=\small]
LoadPackage("SmallGrp");
LoadPackage("ctbllib");
LoadPackage("HAP");

prim:=function(G)
local p;
for p in PrimeDivisors(Order(G)) do
	if IsCyclic(SylowSubgroup(G,p))=true
		and GroupHomology(G,1,p)<>[]
		then return false;
	fi;
od;
return true;
end;

eig:=function(G,i)
local ev,j;
for j in [1..Length(Irr(G)[i])] do
	ev:=Eigenvalues(Irr(G)[i],j);
	if ev[Length(ev)]=0 then return false;fi;
od;
return true;
end;
\end{lstlisting}

For checking the latter condition time can be saved by only considering groups with trivial center.
We list below for each of the three types the smallest primitive groups with a faithful irreducible representation such that all representing matrices have $1$ as an eigenvalue.
\begin{itemize}
\item Real type: The first non-trivial example is the 3-dimensional irreducible representation $\rho$ of $\fS_4$. $L=\rho\oplus\bar{\rho}$ induces an action on $(\C/\Lambda)^6$ for any lattice $\Lambda$ and preserves the symplectic form $dz_1\wedge dz_4+dz_2\wedge dz_5+dz_3\wedge dz_6$.
\item Complex type: {\tt Irr(SmallGroup(216,153))[i]} for $i=9,10$\\
$G$ is a group with $216$ elements with a faithful irreducible representation $\rho$ of degree $8$ generated by the three linear maps
\begin{align*}
(z_1,\ldots,z_8)\mapsto&(z_3,\ldots,z_8,z_1,z_2)\\
(z_1,\ldots,z_8)\mapsto&\zeta_3(z_4,z_3,z_8,z_7,z_5,z_6,z_1,z_2)\\
(z_1,\ldots,z_8)\mapsto&(\zeta_3^2z_1,\zeta_3z_2,\zeta_3^2z_3,\zeta_3z_4,\zeta_3z_5,\zeta_3^2z_6,z_7,z_8)
\end{align*}
for $\zeta_3$ a primitive third root of unity.
$L=\rho\oplus\bar{\rho}$ induces an action on $\C^{16}/(\Z+\zeta_3\Z)^{16}$ preserving the symplectic form $dz_1\wedge dz_9+\ldots+dz_8\wedge dz_{16}$.
\item Quaternionic type: {\tt Irr(SmallGroup(1280, 1116311))[9]}\\
$G$ is a group with $1280$ elements with a faithful irreducible representation $\rho$ of degree $20$ generated by the two linear maps
\begin{align*}
(z_1,\ldots, z_{20})\mapsto&(z_5,\ldots z_{20},z_1,\ldots z_4)\\
(z_1,\ldots, z_{20})\mapsto&(-iz_1,iz_2,iz_3,-iz_4,-z_6,z_5,z_8,-z_7,-iz_{10},-iz_9,iz_{12},\\&iz_{11},-iz_{16},-iz_{15},-iz_{14},-iz_{13},-iz_{20},-iz_{19},iz_{18},iz_{17})
\end{align*}
$L=\rho$ induces an action on $\C^{20}/(\Z+i\Z)^{20}$ preserving the symplectic form $dz_1\wedge dz_2+\ldots+dz_{19}\wedge dz_{20}$.
\end{itemize}

\bibstyle{alpha}
\bibliographystyle{alpha}
\bibliography{general}

\begin{thebibliography}{HNW11}

\bibitem[Bea83a]{Bea83a}
Arnaud Beauville.
\newblock Some remarks on {K}\"{a}hler manifolds with {$c_{1}=0$}.
\newblock In {\em Classification of algebraic and analytic manifolds ({K}atata,
  1982)}, volume~39 of {\em Progr. Math.}, pages 1--26. Birkh\"{a}user Boston,
  Boston, MA, 1983.

\bibitem[Bea83b]{Bea83}
Arnaud Beauville.
\newblock Variétés {K}ähleriennes dont la première classe de {C}hern est
  nulle.
\newblock {\em J. Differential Geom.}, 18(4):755--782 (1984), 1983.
\newblock
  \href{http://projecteuclid.org/getRecord?id=euclid.jdg/1214438181}{Projecteuclid:1214438181}.

\bibitem[Bea00]{Bea00}
Arnaud Beauville.
\newblock Symplectic singularities.
\newblock {\em Invent. Math.}, 139(3):541--549, 2000.
\newblock
  \href{http://dx.doi.org/10.1007/s002229900043}{DOI:10.1007/s002229900043}.

\bibitem[BL04]{BiLa}
Christina Birkenhake and Herbert Lange.
\newblock {\em Complex abelian varieties}, volume 302 of {\em Grundlehren der
  Mathematischen Wissenschaften [Fundamental Principles of Mathematical
  Sciences]}.
\newblock Springer-Verlag, Berlin, second edition, 2004.
\newblock
  \href{https://doi.org/10.1007/978-3-662-06307-1}{DOI:10.1007/978-3-662-06307-1}.

\bibitem[BL18]{BL18}
Benjamin Bakker and Christian Lehn.
\newblock The global moduli theory of symplectic varieties.
\newblock {\em ArXiv Mathematics e-prints}, December 2018.
\newblock Preprint:
  \href{https://arxiv.org/abs/1812.09748v2}{arXiv:1812.09748v2}.

\bibitem[DG18]{DG18}
St\'{e}phane Druel and Henri Guenancia.
\newblock A decomposition theorem for smoothable varieties with trivial
  canonical class.
\newblock {\em J. \'{E}c. polytech. Math.}, 5:117--147, 2018.
\newblock \href{https://doi.org/10.5802/jep.65}{DOI:10.5802/jep.65}.

\bibitem[DHP08]{DHP08}
Jean-Pierre Demailly, Jun-Muk Hwang, and Thomas Peternell.
\newblock Compact manifolds covered by a torus.
\newblock {\em J. Geom. Anal.}, 18(2):324--340, 2008.
\newblock
  \href{https://doi.org/10.1007/s12220-008-9017-z}{DOI:10.1007/s12220-008-9017-z}.

\bibitem[Dru18]{Dru18}
St\'{e}phane Druel.
\newblock A decomposition theorem for singular spaces with trivial canonical
  class of dimension at most five.
\newblock {\em Invent. Math.}, 211(1):245--296, 2018.
\newblock
  \href{https://doi.org/10.1007/s00222-017-0748-y}{DOI:10.1007/s00222-017-0748-y}.

\bibitem[Fuj83]{Fu82}
Akira Fujiki.
\newblock On primitively symplectic compact {K}\"{a}hler {$V$}-manifolds of
  dimension four.
\newblock In {\em Classification of algebraic and analytic manifolds ({K}atata,
  1982)}, volume~39 of {\em Progr. Math.}, pages 71--250. Birkh\"{a}user
  Boston, Boston, MA, 1983.

\bibitem[GAP20]{GAP4}
The GAP~Group.
\newblock {\em {GAP -- Groups, Algorithms, and Programming, Version 4.11.0}},
  2020.
\newblock \href{https://www.gap-system.org}{https://www.gap-system.org}.

\bibitem[GGK19]{GGK17}
Daniel Greb, Henri Guenancia, and Stefan Kebekus.
\newblock Klt varieties with trivial canonical class: holonomy, differential
  forms, and fundamental groups.
\newblock {\em Geom. Topol.}, 23(4):2051--2124, 2019.
\newblock
  \href{https://doi.org/10.2140/gt.2019.23.2051}{DOI:10.2140/gt.2019.23.2051}.

\bibitem[GH94]{GH}
Phillip Griffiths and Joseph Harris.
\newblock {\em Principles of algebraic geometry}.
\newblock Wiley Classics Library. John Wiley \& Sons Inc., New York, 1994.
\newblock Reprint of the 1978 original,
  \href{http://dx.doi.org/10.1002/9781118032527}{DOI:10.1002/9781118032527}.

\bibitem[GKP16]{GKP16}
Daniel Greb, Stefan Kebekus, and Thomas Peternell.
\newblock Singular spaces with trivial canonical class.
\newblock In {\em Minimal Models and Extremal Rays, Kyoto, 2011}, volume~70 of
  {\em Adv. Stud. Pure Math.}, pages 67--113. Mathematical Society of Japan,
  Tokyo, 2016.
\newblock
  \href{http://bookstore.ams.org/aspm-70}{http://bookstore.ams.org/aspm-70}.

\bibitem[GPR94]{CV7}
H.~Grauert, Th. Peternell, and R.~Remmert, editors.
\newblock {\em Several complex variables. {VII}}, volume~74 of {\em
  Encyclopaedia of Mathematical Sciences}.
\newblock Springer-Verlag, Berlin, 1994.
\newblock Sheaf-theoretical methods in complex analysis,
  \href{https://doi.org/10.1007/978-3-662-09873-8}{DOI:10.1007/978-3-662-09873-8}.

\bibitem[HLS20]{HLS20}
Gerhard Hiss, Rafa{\l} Lutowski, and Andrzej Szczepa\'{n}ski.
\newblock Flat manifolds with holonomy representation of quaternionic type.
\newblock {\em ArXiv Mathematics e-prints}, February 2020.
\newblock Preprint: \href{https://arxiv.org/abs/2002.07525}{arXiv:2002.07525}.

\bibitem[HNW11]{HN11}
Daniel Huybrechts and Marc Nieper-Wisskirchen.
\newblock Remarks on derived equivalences of {R}icci-flat manifolds.
\newblock {\em Math. Z.}, 267(3-4):939--963, 2011.
\newblock
  \href{https://doi.org/10.1007/s00209-009-0655-z}{DOI:10.1007/s00209-009-0655-z}.

\bibitem[HP19]{HP19}
Andreas H{\"o}ring and Thomas Peternell.
\newblock Algebraic integrability of foliations with numerically trivial
  canonical bundle.
\newblock {\em Inventiones mathematicae}, Jan 2019.
\newblock
  \href{https://doi.org/10.1007/s00222-018-00853-2}{DOI:10.1007/s00222-018-00853-2}.

\bibitem[HS86]{HS86}
Howard Hiller and Chih-Han Sah.
\newblock Holonomy of flat manifolds with {$b_1=0$}.
\newblock {\em Quart. J. Math. Oxford Ser. (2)}, 37(146):177--187, 1986.
\newblock
  \href{https://doi.org/10.1093/qmath/37.2.177}{DOI:10.1093/qmath/37.2.177}.

\bibitem[HS91]{HS91}
Gerhard Hiss and Andrzej Szczepa\'{n}ski.
\newblock On torsion free crystallographic groups.
\newblock {\em J. Pure Appl. Algebra}, 74(1):39--56, 1991.
\newblock
  \href{https://doi.org/10.1016/0022-4049(91)90047-6}{DOI:10.1016/0022-4049(91)90047-6}.

\bibitem[Hwa08]{Hwa08}
Jun-Muk Hwang.
\newblock Base manifolds for fibrations of projective irreducible symplectic
  manifolds.
\newblock {\em Inventiones mathematicae}, 174(3):625--644, 2008.
\newblock
  \href{http://dx.doi.org/10.1007/s00222-008-0143-9}{DOI:10.1007/s00222-008-0143-9}.

\bibitem[KS18]{KS18}
Stefan Kebekus and Christian Schnell.
\newblock Extending holomorphic forms from the regular locus of a complex space
  to a resolution of singularities.
\newblock {\em ArXiv Mathematics e-prints}, November 2018.
\newblock Preprint:
  \href{https://arxiv.org/abs/1811.03644v3}{arXiv:1811.03644v3}.

\bibitem[Lut18]{Lut18}
Rafa{\l} Lutowski.
\newblock Flat manifolds with homogeneous holonomy representation, mar 2018.
\newblock Preprint:
  \href{https://arxiv.org/abs/1803.07177v3}{arXiv:1803.07177v3}.

\bibitem[Mat01]{Mat01}
Daisuke Matsushita.
\newblock {Fujiki relation on symplectic varieties}.
\newblock {\em ArXiv Mathematics e-prints}, September 2001.
\newblock \href{http://arxiv.org/abs/math/0109165}{arXiv:0109165}.

\bibitem[Per19]{Per19}
Arvid Perrego.
\newblock Examples of irreducible symplectic varieties.
\newblock {\em ArXiv Mathematics e-prints}, May 2019.
\newblock Preprint: \href{https://arxiv.org/abs/1905.11720}{arXiv:1905.11720}.

\bibitem[Sch17]{Sch17}
Martin Schwald.
\newblock Fujiki relations and fibrations of irreducible symplectic varieties.
\newblock {\em ArXiv Mathematics e-prints}, January 2017.
\newblock Preprint: \href{https://arxiv.org/abs/1701.09069}{arXiv:1701.09069}.

\bibitem[Ser77]{Ser77}
Jean-Pierre Serre.
\newblock {\em Linear representations of finite groups}.
\newblock Springer-Verlag, New York-Heidelberg, 1977.
\newblock Translated from the second French edition by Leonard L. Scott,
  Graduate Texts in Mathematics, Vol. 42.

\bibitem[Uen75]{Ueno75}
Kenji Ueno.
\newblock {\em Classification theory of algebraic varieties and compact complex
  spaces}.
\newblock Lecture Notes in Mathematics, Vol. 439. Springer-Verlag, Berlin,
  1975.
\newblock Notes written in collaboration with P. Cherenack.

\bibitem[Wan54]{Wan54}
Hsien-Chung Wang.
\newblock Complex parallisable manifolds.
\newblock {\em Proc. Amer. Math. Soc.}, 5:771--776, 1954.
\newblock \href{https://doi.org/10.2307/2031863}{DOI:10.2307/2031863}.

\end{thebibliography}

\end{document}